\newtheorem{theorem}{Theorem}[section]
\newtheorem{Lemma}[theorem]{{\bf Lemma}}
\newtheorem{rem}[theorem]{{\bf Remark}}
\newtheorem{definition}{Definition}[section]
\numberwithin{equation}{section}
\newenvironment{proof}{\indent{\em Proof:}}{\quad \hfill
$\Box$\vspace*{2ex}}
\begin{document}
\setcounter{page}{1}
\begin{center}
\vspace{0.4cm} {\large{\bf Implicit Fractional Differential Equations in Banach Spaces via Picard and Weakly Picard Operator Theory }} \\
\vspace{0.5cm}
Sagar T. Sutar  $^{1}$\\
sutar.sagar007@gmail.com\\
\vspace{0.35cm}
Kishor D. Kucche $^{2}$ \\
kdkucche@gmail.com \\

\vspace{0.35cm}
$^1$ Department of Mathematics, Vivekanand College (Autonomous),\\ Kolhapur-416003, Maharashtra, India.\\
$^{2}$ Department of Mathematics, Shivaji University, Kolhapur-416 004, Maharashtra, India.\\
\end{center}

\def\baselinestretch{1.0}\small\normalsize

\begin{abstract}
In this paper, by employing fixed-point methods, we obtain
the existence and uniqueness results for the nonlinear implicit fractional differential equations in Banach spaces. Further, we obtain
the uniqueness, dependence of the solution on the initial condition as well as on the functions involved on the right-hand side by means of Picard and weakly Picard operator theory and Pompeiu--Hausdorff functional. 
\end{abstract}
\noindent\textbf{Key words:}  Implicit fractional differential equations, existence, uniqueness, continuous dependence, Picard opeartors, Weakly Picard operators.\\
\noindent
\textbf{2010 Mathematics Subject Classification:} {26A33, 34A08, 34A12}
\def\baselinestretch{1.5}
\allowdisplaybreaks
\section{Introduction}
The initial and boundary value problems for fractional differential equations (FDEs) are naturally appearing in the modeling of various physical phenomena that appearing in diverse disciplines of science and engineering \cite{KD1,WGG,RHIL,FMain,FMe,VET}.  Subsequently, this field is drawing the attention of many mathematicians for researching numerous types of FDEs for its theory and applicability. Basics of fractional calculus, detailed theoretical analysis and applications of FDEs can be found in the interesting monographs by Kilbas et al.\cite{Kilbas}, Lakshmikantham et al. \cite{VLAk}, Podlubny  \cite{Pod}, Diethelm  \cite{Diethelm}. Fundamental results on the existence and uniqueness of solutions, comparison results, different types of data dependency, the existence of extremal solutions and global existence of the nonlinear FDEs  can found in \cite{Laksh1,Laksh2,Laksh,Gejji1,Diethelm1,Agarwal1,JWang3}. Recently, there have been numerous papers that deal with the existence, uniqueness, controllability  and various qualitative properties of the solution for FDEs with initial and boundary conditions \cite{Ravi1,Ravi2,NDC,GIS}. 

The implicit differential equation of the form
$$
x^{(n)}(t)=f\left(t,\ x(t),\ x'(t),\ x''(t),\cdots,\ x^{(n-1)}(t)\right),
$$
with  different kind of initial or boundary condition has been analyzed for existence, uniqueness and various other qualitative properties of the solution, for instance, see \cite{DLI} and the references cited therein. Motivated by the investigations in the theory of integer order implicit differential equation considered above and its different special from, Benchohra et al. \cite{Benchora4,Benchora5} and Nieto et al. \cite{JNieto} have initiated the study of implicit FDEs of the form 
$$
D^{\alpha}x(t)= f(t,x(t), D^{\alpha}x(t)),
$$
with a different kind of initial and boundary condition.  Nieto et al. \cite{JNieto}  derived existence and uniqueness results by employing  fixed point theory and an approximation method.  Benchohra et al. {\cite{Benchora3,Ben2,Ben3}} investigated  the existence of integrable solutions for implicit fractional  functional differential equations with and without delay.  Kucche et.al. in  \cite{Kucche,Kucche1,Sutar} have analyzed implicit FDEs for  existence, uniqueness  dependence of  solution on various initial data and Ulam--Hyers and Ulam--Hyers-Rassias stabilities. 

On the other hand, Rus \cite{Rus1,Rus2} introduced the fundamental results of the weakly Picard operators theory and employed it to analyze the Cauchy problem, boundary value problem, integral equations, and difference equations. Wang  \cite{JWang7} obtained some existence, uniqueness, and data dependence results by means of Picard and weakly Picard operators theory and the Bielecki norms. Then again, Otrocol and Ilea \cite{ODI,ODI2} analyzed differential and integrodifferential equations with abstract Volterra operators using the method of Picard and weakly Picard operators.

Motivated by the work of \cite{Rus2,JWang7},  in the present  manuscript,  we obtain
the existence, uniqueness and dependency of solution for the
nonlinear implicit FDEs of the form,
\begin{align}
^c_{0}\mathcal{D}_{t}^\alpha x(t)&= f\left( t, x(t), ^c_{0}\mathcal{D}_{t}^\alpha x(t) \right), t\in J, ~0<\alpha<1 \label{Pe1.1} \\
x(0)&=x_0\in X, \label{Pe1.2}
\end{align}
in the  Banach space $X=(X,\|\cdot\|)$, where $J=[0,T], T>0$,  $^c_{0}\mathcal{D}_{t}^\alpha$ is the Caputo fractional derivative operator of order $\alpha$ and lower terminal $0$ and  $f:J\times X\times X\to X$ is a nonlinear function satisfying certain assumptions.

We obtain the  existence and uniqueness of the solution by using fixed-point methods in the space of Lipschitz function and considering the Bielecki  norm.  The dependency of a solution is obtained through Picard operator theory and  Pompeiu--Hausdorff functional. It is observed that the single inequality obtained for the difference of solutions via Picard operator theory gives simultaneously,  the  uniqueness of solution, the dependence of the solution on the initial condition as well as on the functions involved on the right-hand side of the problem \eqref{Pe4.1}-\eqref{Pe4.2}.  Whereas, the inequality obtained for the  solutions via Pompeiu--Hausdorff functional  gives the dependency of the  solution on  functions involved in the right hand side of the problem \eqref{Pe4.1}-\eqref{Pe4.2}. 


This paper is structured in the following way. In section 2, we present briefly the basic definitions
and results of fractional calculus, Picard and weakly Picard operators.  In Section 3, we obtain
the existence, uniqueness and dependency of solution for the
nonlinear implicit FDEs \eqref{Pe4.1}-\eqref{Pe4.2}. In Section 4, we provide examples to illustrate our results. Finally,we close the paper with concluding remarks.
   
\section{Preliminaries}
In this section, we give some definitions and basic results of Caputo fractional derivative  \cite{Diethelm,Pod} and Picard and weakly Picard operator theory  \cite{Rus1,JWang7,Rus}. 
\begin{definition}
 The Riemann-Liouville fractional integral~ $\mathcal{I}^{\beta}_{0,t}$~ of order $\beta > {0}$ of a function $g\in C[0,T]$, $T> 0$  is defined as
\begin{equation*}
 \mathcal{I}^{\beta}_{0,t}\; g(t)=\frac{1}{\Gamma(\beta)}\int_{0} ^t (t-\sigma)^{\beta -1 }g(\sigma)d\sigma ,\;\; t> 0,
\end{equation*}
provided the integral exists. 
\end{definition}

\begin{definition}
Let $0<\alpha\leq 1 $ then the Caputo fractional derivative $ ^c_{0}\mathcal{D}_{t}^\alpha$ of order $\alpha$ with lower terminal $0$ of a function $g\in C^{1}[0,T] $  is defined as
\begin{equation*}
^c_{0}\mathcal{D}_{t}^\alpha g(t)=\frac{1}{\Gamma(1-\alpha)}\int_{0}^t(t-\sigma)^{-\alpha}g'(\sigma)d\sigma,\;\; t>0.
\end{equation*}
\end{definition}

\noindent Next, we provide the basics of Picard and weakly Picard operators theory from  \cite{Rus1,JWang7,Rus}. Let $X:=\left(X,d \right)$ be a metric space and $T: X \to X$ an operator. Then:
  \begin{itemize}
\item [(i)] $P(X):=\left\lbrace Y\subseteq X : Y\ne \phi \right\rbrace $.
\item [(ii)] $F_T:=\left\lbrace x\in X: T(x)=x\right\rbrace $-- the fixed point set of $T$.
\item [(iii)] $I(T):=\left\lbrace Y\in P(X): T(Y)\subseteq Y \right\rbrace$ -- a set of $T$-invariant nonempty subsets $X$.
\item [(iv)] The Pompeiu-Hausdorff  functional $H_d:P(X)\times P(X)\to \mathbb{R}_+\cup\left\lbrace \infty\right\rbrace $ is defined as 
 $$H_d\left(Y,Z \right)=\max\left\lbrace\underset{a\in Y}{\sup}\;\underset{b\in Z}{\inf} d(a,b), \;\underset{b\in Z}{\sup}\;\underset{a\in Y}{\inf} d(a,b)\right\rbrace. $$\end{itemize}
 \begin{definition} [\cite{JWang7}]
 Let $(X,d)$ be a metric space. An operator $T:X\to X$ is a Picard operator if there exists $x^*\in X$ such that $F_T=\left\lbrace x^* \right\rbrace $ and the sequence $\left(T^n(x_0) \right)_{n\in\mathbb{N}}$ converges to $x^*$ for all $x_0\in X$.
 \end{definition}
 \begin{definition}[\cite{JWang7}]
 Let $(X,d)$ be a metric space. An operator $T:X\to X$ is a weakly Picard operator if the sequence $\left(T^n(x_0) \right)_{n\in\mathbb{N}}$ converges for all $x_0\in X$ and its limit ( which may depend on $x_0$ ) is a fixed point of $T$.
  \end{definition}

If $T:X\to X$ is a weakly Picard operator, then the  operator
$T^{\infty}:X\to X$ defined by  $$\;T^{\infty}(x)=\lim\limits_{n\to \infty}T^n(x).$$ 
 \begin{Lemma}[\cite{JWang7}]\label{Lm2.3}
   Let $(X,d)$ be a  metric space. Then $T:X\to X$ is a weakly Picard operator if and only if there exists a partition of $X$, $X=\bigcup_{\lambda\in \Lambda}X_\lambda$, where $\Lambda$ is the indices' set of partition, such that
\begin{itemize}
 \item [\normalfont(i)] $X_\lambda \in I(T)$,
\item [\normalfont(ii)]$T|_{X_\lambda}:X_\lambda\to X_\lambda$ is a Picard operator, for all $\lambda\in \Lambda$
\end{itemize}
\end{Lemma}
\begin{Lemma}[\cite{JWang7}]\label{PLm2.1}
Let $(X,d)$ be a complete metric space and $T,S:X\to X$ two operators. We suppose the following
\begin{itemize}
\item [\normalfont(i)]  $T$ is a contraction with constant $\alpha$ and $F_T=\left\lbrace x_T^* \right\rbrace $. 
\item [\normalfont(ii)] $S$ has a fixed points and  $ x_S^*\in F_S$.
\item [\normalfont(iii)] There exists $\gamma>0$ such that $d\left(T(x),S(x) \right)\leq \gamma$,\;for all $x\in X$. 
\end{itemize}
Then,
$$d\left( x_T^*,x_S^*\right)\leq\dfrac{\gamma}{1-\alpha}  $$
\end{Lemma}
\begin{Lemma}[\cite{JWang7}]\label{PLm2.2}
Let $(X,d)$ be a complete metric space and $T,S:X\to X$ two orbitally continuous operators. We suppose the following:
\begin{itemize}
\item [\normalfont(i)]  There exists $\alpha\in [0,1) $ such that
\begin{align*}
d\left( T^2(x),T(x)\right)&\leq \alpha\; d\left( T(x),x \right)\\d\left( S^2(x),S(x)\right)&\leq \alpha\; d\left( S(x),x \right)
\end{align*}
for all $x\in X$. 
\item [\normalfont(ii)] There exists $\gamma>0$ such that $d\left(T(x),S(x) \right)\leq \gamma$,\; for all $x\in X$.
\end{itemize}
Then,
 $$H_d\left( F_T,F_S\right)\leq\dfrac{\gamma}{1-\alpha},  $$ 
where $H_d$ denotes Pompeiu-Hausdorff functional.
    \end{Lemma}

\section{Main results}
To derive our main results, we consider the spaces defined in \cite{JWang7}.

Let $C:=C(J,X)$ be the space of all $X$-valued continuous functions defined on $J$. Then $ \mathcal{C}:=\left( C,\|\cdot\|_\mathcal{C} \right)$ is the Banach space with the Chebyshev norm $$
\|x\|_{\mathcal{C}}:=\sup_{t\in J}\left\lbrace\|x(t)\| \right\rbrace. $$
Further, $\mathcal{B}=\left( C,\|\cdot\|_B\right) $ is the Banach space endowed with  the Bielecki  norm 
$$
\|x\|_B:=\sup_{t\in J}\left\lbrace\dfrac{\|x(t)\|  }{\mathbb{E}_\alpha\left(\theta t^\alpha \right) }  \right\rbrace,\; x\in C\;\text{for some}\;\theta>0.
$$ 
Let  $d_\mathcal{B}$ and $d_\mathcal{C}$ are the metrics on $C$  generated by the norms $\|\cdot\|_\mathcal{B}$ and $\|\cdot\|_\mathcal{C}$. 

Let $L>0$ and define the set
$$C_L(J,X):=\left\lbrace x\in C(J,X):\|x(t_1)-x(t_2)\|\leq L|t_1-t_2|,\; \forall\; t_1,~t_2\in J\right\rbrace.$$ 

Let $R>0$,  $B_R=S[0,R]=\left\lbrace x\in X:\|x\|\leq R\right\rbrace$  and  
$$
C_L(J,B_R):=\left\lbrace x\in C(J,B_R):\|x(t_1)-x(t_2)\|\leq L|t_1-t_2|,\; \forall\; t_1,t_2\in J\right\rbrace.
$$

We acquire our main results via equivalent fractional integral equation to the nonlinear implicit FDEs \eqref{Pe1.1}--\eqref{Pe1.2} given in the following lemma.

\begin{Lemma}[\cite{Benchora3}]\label{lm3.1}
The solution of the IVP \eqref{Pe1.1}--\eqref{Pe1.2}  can be expressed by the integral equation
 \begin{equation}\label{Pe3.1}
 x(t)=x_0+\dfrac{1}{\Gamma(\alpha)}\int_{0}^{t}(t-s)^{\alpha-1}z(s)ds,\; t\in J;
 \end{equation} 
 where $z$ is the solution of the functional integral equation
 \begin{equation}\label{Pe3.2}
 z(t)=f\left( t,x_0+\dfrac{1}{\Gamma(\alpha)}\int_{0}^{t}(t-s)^{\alpha-1}z(s)ds, z(t)\right), t\in J.  
 \end{equation}
\end{Lemma}

\subsection{Existence and Uniqueness Results:}

\begin{theorem}\label{Thm3.2}
Assume that:
\begin{itemize}
\item[\normalfont{(A1)}] \;$f:J\times  X\times X\to X$ is  a continuous function and satisfies Lipschitz type condition
\begin{equation}
\begin{small}\label{e3.3}
\|f(t_1,x_1,y_1)-f(t_2,x_2,y_2)\|\leq M_1|t_1-t_2|+M_2\|x_1-x_2\|+M_3\|y_1-y_2\|,
\end{small}
\end{equation}
for all $t_i\in J$ and  $x_i,\;y_i \in X$,
 where $M_i>0,\;  i=1,2 $ and $ 0<M_3<1.$
\end{itemize}
Then, the problem \eqref{Pe1.1}--\eqref{Pe1.2} has unique solution, provided that \begin{equation}\label{e3.4}
\dfrac{M_2}{\Gamma(\alpha+1)}T^{\alpha}+M_3 <1.
\end{equation}
Further, the solution $z^*\in C_L(J,B_R)$ of functional integral equation \eqref{Pe3.2} can be obtained by successive approximation method starting from any element of $C_L(J,B_R)$. Utilizing this  $z^*$, the solution $x^*$ of \eqref{Pe1.1}--\eqref{Pe1.2} is then acquired from  the equation \eqref{Pe3.1}. 
\end{theorem}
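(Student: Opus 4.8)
The plan is to recast \eqref{Pe1.1}--\eqref{Pe1.2} as a fixed-point equation for the auxiliary function $z$ and to apply the Banach contraction principle in the Bielecki metric. By Lemma \ref{lm3.1}, solving the IVP is equivalent to finding a fixed point of the operator $\mathcal{A}:C_L(J,B_R)\to C(J,X)$ given by
\begin{equation*}
(\mathcal{A}z)(t)=f\!\left(t,\,u_z(t),\,z(t)\right),\qquad u_z(t)=x_0+\frac{1}{\Gamma(\alpha)}\int_0^t(t-s)^{\alpha-1}z(s)\,ds,
\end{equation*}
since any fixed point $z^*$ of $\mathcal{A}$ is a solution of \eqref{Pe3.2} and produces, through \eqref{Pe3.1}, a solution $x^*$ of the IVP, and conversely. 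First I would record that $\left(C_L(J,B_R),d_{\mathcal{B}}\right)$ is a complete metric space: it is a subset of the Banach space $\mathcal{B}$, and since $\mathbb{E}_\alpha(\theta t^\alpha)$ is bounded between positive constants on the compact interval $J$, the Bielecki and Chebyshev norms are equivalent on $C$; hence both the Lipschitz constraint and the closed-ball constraint $\|x(t)\|\le R$ pass to $\|\cdot\|_B$-limits, so the set is closed and therefore complete.

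The first substantive step is to show that $\mathcal{A}$ maps $C_L(J,B_R)$ into itself for a suitable choice of $L$ and $R$. Continuity of $\mathcal{A}z$ follows from continuity of $f$ and of the fractional-integral term. To control the norm I would use \eqref{e3.3} in the form $\|f(t,u,w)\|\le \|f(0,0,0)\|+M_1 t+M_2\|u\|+M_3\|w\|$, together with $\|u_z(t)\|\le\|x_0\|+\frac{R}{\Gamma(\alpha+1)}T^\alpha$ and $\|z(t)\|\le R$, to obtain
\begin{equation*}
\|(\mathcal{A}z)(t)\|\le \|f(0,0,0)\|+M_1T+M_2\|x_0\|+\left(\frac{M_2}{\Gamma(\alpha+1)}T^\alpha+M_3\right)R .
\end{equation*}
Because the bracketed coefficient is $<1$ by \eqref{e3.4}, one can fix $R$ large enough that the right-hand side is $\le R$, giving $\mathcal{A}z\in B_R$. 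For the Lipschitz constraint I would bound $\|(\mathcal{A}z)(t_1)-(\mathcal{A}z)(t_2)\|$ via \eqref{e3.3}: the first and third terms are controlled by $M_1|t_1-t_2|$ and $M_3 L|t_1-t_2|$ (the latter using $z\in C_L$), while the middle term requires the modulus-of-continuity estimate $\|u_z(t_1)-u_z(t_2)\|\le \frac{2R}{\Gamma(\alpha+1)}|t_1-t_2|^\alpha$ for the fractional integral of a bounded function. I expect this to be the main obstacle: since the Riemann--Liouville integral only improves regularity to Hölder order $\alpha$, reconciling the $\alpha$-Hölder behaviour of $u_z$ with the targeted Lipschitz bound $L|t_1-t_2|$ on $J$ is the delicate point, and it is here that $L$ (jointly with $R$ and $\theta$) must be chosen so that the self-map property closes.

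Granting the self-map property, the contraction estimate is the cleanest step. For $z_1,z_2\in C_L(J,B_R)$, \eqref{e3.3} gives
\begin{equation*}
\|(\mathcal{A}z_1)(t)-(\mathcal{A}z_2)(t)\|\le \frac{M_2}{\Gamma(\alpha)}\int_0^t(t-s)^{\alpha-1}\|z_1(s)-z_2(s)\|\,ds+M_3\|z_1(t)-z_2(t)\| .
\end{equation*}
Bounding $\|z_1(s)-z_2(s)\|\le \|z_1-z_2\|_B\,\mathbb{E}_\alpha(\theta s^\alpha)$ and invoking the identity $\frac{1}{\Gamma(\alpha)}\int_0^t(t-s)^{\alpha-1}\mathbb{E}_\alpha(\theta s^\alpha)\,ds=\frac{1}{\theta}\bigl(\mathbb{E}_\alpha(\theta t^\alpha)-1\bigr)\le \frac{1}{\theta}\mathbb{E}_\alpha(\theta t^\alpha)$, then dividing by $\mathbb{E}_\alpha(\theta t^\alpha)$ and taking the supremum over $t\in J$, I obtain
\begin{equation*}
\|\mathcal{A}z_1-\mathcal{A}z_2\|_B\le\left(\frac{M_2}{\theta}+M_3\right)\|z_1-z_2\|_B .
\end{equation*}

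Since $M_3<1$, choosing $\theta>\frac{M_2}{1-M_3}$ makes the constant strictly less than $1$, so $\mathcal{A}$ is a contraction on the complete space $\left(C_L(J,B_R),d_{\mathcal{B}}\right)$. The Banach fixed-point theorem then yields a unique $z^*$, realised as the limit of the successive approximations $z_{n+1}=\mathcal{A}z_n$ starting from an arbitrary $z_0\in C_L(J,B_R)$. Finally, defining $x^*$ through \eqref{Pe3.1} and invoking the equivalence in Lemma \ref{lm3.1}, I conclude that \eqref{Pe1.1}--\eqref{Pe1.2} has a unique solution, which completes the proof. The only genuinely technical point, as noted, is the regularity bookkeeping needed to keep the iterates inside $C_L(J,B_R)$; the existence, uniqueness, and convergence of approximations all follow from the single Bielecki-norm contraction estimate.
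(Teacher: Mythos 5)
Your strategy is the same as the paper's: the operator $(Tz)(t)=f\bigl(t,\,x_0+\frac{1}{\Gamma(\alpha)}\int_0^t(t-s)^{\alpha-1}z(s)\,ds,\,z(t)\bigr)$ on $C_L(J,B_R)$, invariance of the ball $B_R$ via \eqref{e3.4}, and the Bielecki-norm contraction through the identity $\mathcal{I}^{\alpha}_{0,t}\,\mathbb{E}_\alpha(\theta t^\alpha)=\frac{1}{\theta}\bigl(\mathbb{E}_\alpha(\theta t^\alpha)-1\bigr)$ with $\theta>M_2/(1-M_3)$; your completeness remark, norm bound, and contraction estimate all match the paper's Part (II) and Steps 1--2 of Part (I). The genuine gap is precisely the step you flag and then ``grant'': that $Tz$ is again $L$-Lipschitz. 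This cannot be repaired by any choice of $L$, $R$, $\theta$. Take $X=\mathbb{R}$, $f(t,x,y)=M_2x$ (which satisfies (A1), and \eqref{e3.4} for small $M_2$, $M_3$, $T$) and $x_0\neq 0$, and let $z\equiv R$ be the constant function, which lies in $C_L(J,B_R)$ for every $L$. Then $(Tz)(t)=M_2x_0+\frac{M_2R}{\Gamma(\alpha+1)}t^\alpha$, so $|(Tz)(t)-(Tz)(0)|/t=\frac{M_2R}{\Gamma(\alpha+1)}t^{\alpha-1}\to\infty$ as $t\to 0^{+}$: $Tz$ lies in no Lipschitz class at all. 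Worse, the actual fixed point in this example is $z^*(t)=M_2x_0\mathbb{E}_\alpha(M_2t^\alpha)$, which is likewise only H\"older of order $\alpha$ at $t=0$, so the claim $z^*\in C_L(J,B_R)$ is not merely unproven but false in general.

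You should also know that the paper's own proof breaks at exactly this point: its Step 3 reaches the bound $(M_1+M_3L)\,|t_1-t_2|+\frac{2M_2R}{\Gamma(\alpha+1)}(t_2-t_1)^{\alpha}$ and then rewrites it as $\bigl(M_1+\frac{2M_2R}{\Gamma(\alpha+1)}+M_3L\bigr)|t_2-t_1|$, i.e.\ it silently replaces $(t_2-t_1)^{\alpha}$ by $|t_2-t_1|$, which is false for $0<|t_2-t_1|<1$; the calibration \eqref{e3.6} of $L$ rests on that invalid identification. So your instinct that the H\"older-versus-Lipschitz mismatch is ``the main obstacle'' is correct, and the repair is structural rather than a matter of bookkeeping. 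Either (i) replace $C_L(J,B_R)$ by the H\"older class $C_{L,\alpha}(J,B_R)=\{z\in C(J,B_R):\|z(t_1)-z(t_2)\|\le L|t_1-t_2|^{\alpha}\ \forall\,t_1,t_2\in J\}$, for which your own estimates give invariance as soon as $M_1T^{1-\alpha}+\frac{2M_2R}{\Gamma(\alpha+1)}\le L(1-M_3)$ (using $|t_1-t_2|\le T^{1-\alpha}|t_1-t_2|^{\alpha}$), after which your contraction argument runs verbatim; or (ii) drop the regularity constraint entirely and contract on $C(J,B_R)$, or indeed on all of $(C(J,X),\|\cdot\|_B)$, since your contraction estimate never uses the Lipschitz property --- this still yields existence and uniqueness of $z^*$, hence of $x^*$ via Lemma \ref{lm3.1}, at the price of abandoning the untenable membership claim $z^*\in C_L(J,B_R)$.
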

\begin{proof}
In the view of condition \eqref{e3.4}, we can choose  $R>0$ such that \begin{equation}\label{e3.5}
\dfrac{M_2 \|x_0\|+K}{ 1-\left(\dfrac{M_2}{\Gamma(\alpha+1)}T^{\alpha}+M_3\right)} \leq R, 
\end{equation} where $K=\sup_{t\in J}\left\| f\left( t,0,0\right)\right\|$.
Further the condition $0<M_3<1$ allows us to choose  $L>0$ such that
\begin{equation}\label{e3.6}
\dfrac{  M_1+\dfrac{2M_2R}{\Gamma(\alpha+1)}} {1-M_3} \leq L.
\end{equation}
With these choices of $L$ and $R$ consider the space 
$$C_L(J,B_R):=\left\lbrace x\in C(J,B_R):\|x(t_1)-x(t_2)\|\leq L|t_1-t_2|,\; \forall\; t_1,t_2\in J\right\rbrace.$$
Define the mapping, 
\begin{equation}\label{e3.1}
(Tz)(t)=f\left( t,x_0+\dfrac{1}{\Gamma(\alpha)}\int_{0}^{t}(t-s)^{\alpha-1}z(s)ds, z(t)\right),\;  t\in J,\; z\in C_L(J,B_R),
\end{equation}
We show that the operator $T$ is a Picard operator on $C_L(J,B_R).$ We divide the proof in the two parts.

\noindent Part-(I):  First we prove that $T$ is self map on $C_L(J,B_R).$ We give the proof in the following steps.\\
\noindent Step~1:~ $Tz\in C(J,X)$ for any $z\in C_L(J,B_R)$.

\noindent Let any $\delta>0$. Then for any $z\in C_L(J,B_R)$ and $t\in J,$  we have
\begin{equation}\label{e3.8}
\|z(t+\delta)-z(t)\|\leq L|t+\delta-t|=L\delta,\;  \;\text{and}\; \|z(t)\|\leq R.
\end{equation}
By Lipschitz type condition on $f$ given in (A1) and using the inequalities in \eqref{e3.8}, we obtain
\begin{align*}
&\|(Tz)(t+\delta)-(Tz)(t)\|\\
&=\left\|f\left( t+\delta,x_0+\dfrac{1}{\Gamma(\alpha)}\int_{0}^{t+\delta}(t+\delta-s)^{\alpha-1}z(s)ds, z(t+\delta)\right)\right.\\ &\quad\quad\left.
\quad- f\left( t,x_0+\dfrac{1}{\Gamma(\alpha)}\int_{0}^{t}(t-s)^{\alpha-1}z(s)ds, z(t)\right)\right\|\\
&\leq M_1|(t+\delta)-t|+M_2\left\|\left( x_0+\dfrac{1}{\Gamma(\alpha)}\int_{0}^{t+\delta}(t+\delta-s)^{\alpha-1}z(s)ds\right)\right.\\ &\quad\quad\left.
\quad-\left( x_0+\dfrac{1}{\Gamma(\alpha)}\int_{0}^{t}(t-s)^{\alpha-1}z(s)ds\right)  \right\| +M_3\|z(t+\delta)-z(t)\|\\
&\leq M_1 \delta+\dfrac{M_2}{\Gamma(\alpha)}\int_{0}^{t}\left[ (t+\delta-s)^{\alpha-1}-(t-s)^{\alpha-1}\right]\|z(s)\|ds\\
&\quad+\dfrac{M_2}{\Gamma(\alpha)}\int_{t}^{t+\delta} (t+\delta-s)^{\alpha-1}\|z(s)\|ds+M_3\|z(t+\delta)-z(t)\|\\
&\leq (M_1+M_3L)\delta+\dfrac{M_2R}{\Gamma(\alpha)}\int_{0}^{t}\left[ (t+\delta-s)^{\alpha-1}-(t-s)^{\alpha-1}\right]ds
+\dfrac{M_2R}{\Gamma(\alpha)}\int_{t}^{t+\delta} (t+\delta-s)^{\alpha-1}ds\\
&=(M_1+M_3L)\delta+\dfrac{M_2R}{\Gamma(\alpha+1)}\left\lbrace  \left[-\delta^\alpha+(t+\delta)^\alpha-t^\alpha\right]+\delta^\alpha \right\rbrace \\
&=(M_1+M_3L)\delta+\dfrac{M_2R}{\Gamma(\alpha+1)}  \left[(t+\delta)^\alpha-t^\alpha\right].
\end{align*}
Therefore,
$$\lim\limits_{\delta\to 0} \|(Tz)(t+\delta)-(Tz)(t)\|= 0.$$
This proves  that  $Tz\in C(J,X)$.\\

\noindent Step~2:~$Tz\in C_L(J,B_R)$ for any $z\in C_L(J,B_R).$

\noindent Let any $z\in C_L(J,B_R)$. Then using inequalities \eqref{e3.5} and \eqref{e3.8}, for any $t\in J$, we obtain 
\begin{align*}
\|T(z)(t)\|&=\left\| f\left( t,x_0+\dfrac{1}{\Gamma(\alpha)}\int_{0}^{t}(t-s)^{\alpha-1}z(s)ds, z(t)\right)\right\|\\
&=\left\| f\left( t,x_0+\dfrac{1}{\Gamma(\alpha)}\int_{0}^{t}(t-s)^{\alpha-1}z(s)ds, z(t)\right)- f\left( t,0,0\right)\right\| +\left\| f\left( t,0,0\right)\right\| \\
&\leq M_2 \|x_0\|+\dfrac{M_2}{\Gamma(\alpha)}\int_{0}^{t}(t-s)^{\alpha-1}\|z(s)\|ds+M_3\|z(t)\|+K\\
& \leq (M_2 \|x_0\|+K)+\dfrac{M_2R}{\Gamma(\alpha)}\int_{0}^{t}(t-s)^{\alpha-1}ds+M_3R\\
&= (M_2 \|x_0\|+K)+\left[ \dfrac{M_2}{\Gamma(\alpha+1)}T^{\alpha}+M_3\right] R\\
&\leq R\left[1- \left( \dfrac{M_2}{\Gamma(\alpha+1)}T^{\alpha}+M_3\right)\right] +\left[ \dfrac{M_2 R}{\Gamma(\alpha+1)}T^{\alpha}+M_3R\right] = R.
\end{align*}
This proves  that  any $z\in C_L(J,B_R)$, $Tz\in C(J,B_R).$\\
\noindent Step~3:~ For any $z\in C(J,B_R),\;Tz$ is Lipschitz with Lipschitz constant $L$. 

\noindent Let  any $z\in C_L(J,B_R)$ and  $t_1,t_2\in J$ with $t_1<t_2$. Then by Lipschitz type condition on $f$ given in (A1)  and the inequality  \eqref{e3.8}, we obtain
\begin{align*}
&\|(Tz)(t_1)-(Tz)(t_2)\|\\
&=\left\| f\left( t_1,x_0+\dfrac{1}{\Gamma(\alpha)}\int_{0}^{t_1}(t_1-s)^{\alpha-1}z(s)ds, z(t_1)\right)\right.\\ &\quad\quad\left.
\quad-f\left( t_2,x_0+\dfrac{1}{\Gamma(\alpha)}\       nt_{0}^{t_2}(t_2-s)^{\alpha-1}z(s)ds, z(t_2)\right)\right\|\\
&\leq M_1|t_1-t_2|+\dfrac{M_2}{\Gamma(\alpha)}\int_{0}^{t_1}\left[ (t_1-s)^{\alpha-1}-(t_2-s)^{\alpha-1}\right] \|z(s)\|ds\\
&\qquad+\dfrac{M_2}{\Gamma(\alpha)}\int_{t_1}^{t_2}(t_2-s)^{\alpha-1}\|z(s)\|ds +M_3\|z(t_2)-z(t_1)\|\\
&\leq M_1|t_1-t_2|+\dfrac{M_2R}{\Gamma(\alpha)}\int_{0}^{t_1}\left[ (t_1-s)^{\alpha-1}-(t_2-s)^{\alpha-1}\right] ds\\
&\qquad+\dfrac{M_2R}{\Gamma(\alpha)}\int_{t_1}^{t_2}(t_2-s)^{\alpha-1}ds+M_3L|t_2-t_1|\\
&=\left( M_1+M_3L\right) |t_1-t_2|+\dfrac{M_2R}{\Gamma(\alpha+1)}\left[ (t_2-t_1)^{\alpha}+t_1^\alpha-t_2^\alpha\right]+\dfrac{M_2R}{\Gamma(\alpha+1)} (t_2-t_1)^{\alpha} \\
&\leq \left( M_1+M_3L\right) |t_1-t_2|+\dfrac{2M_2R}{\Gamma(\alpha+1)} (t_2-t_1)^{\alpha}=\left( M_1+\dfrac{2M_2R}{\Gamma(\alpha+1)}+M_3L \right) |t_2-t_1|\\
&\leq \left(L(1-M_3)+M_3L \right) |t_2-t_1|=L|t_2-t_1|.
\end{align*}
This shows that $Tz$ is Lipschitz function with Lipschitz constant $L$. From Steps-1 to 3, we get $Tz\in C_L(J,B_R)$. Hence $T$ is self mapping on $C_L(J,B_R)$.\\

\noindent Part-(II):  To prove $T:C_L(J,B_R)\to C_L(J,B_R)$  is a Picard Operator, we prove that $T$ is contraction. Let any $z_1,z_2\in C_L(J,B_R) $. Then for any $t\in J$, we have 
\begin{align*}
&\|T(z_1)(t)-T(z_2)(t)\|\\
&=\left\| f\left( t,x_0+\dfrac{1}{\Gamma(\alpha)}\int_{0}^{t}(t-s)^{\alpha-1}z_1(s)ds, z_1(t)\right)\right.\\ &\quad\quad\left.
\quad- f\left( t,x_0+\dfrac{1}{\Gamma(\alpha)}\int_{0}^{t}(t-s)^{\alpha-1}z_2(s)ds, z_2(t)\right)\right\| \\
&\leq \dfrac{M_2}{\Gamma(\alpha)}\int_{0}^{t}(t-s)^{\alpha-1}\|z_1(s)-z_2(s)\|ds+M_3\|z_1(t)-z_2(t)\|\\
&\leq \dfrac{M_2}{\Gamma(\alpha)}\int_{0}^{t}(t-s)^{\alpha-1}\mathbb{E}_\alpha(\theta s^\alpha)\sup_{s\in J}\left( \dfrac{\|(z_1-z_2)(s)\|}{\mathbb{E}_\alpha(\theta s^\alpha)}\right) 	 ds+M_3\mathbb{E}_\alpha(\theta t^\alpha)\sup_{t\in J}\left( \dfrac{\|(z_1-z_2)(t)\|}{\mathbb{E}_\alpha(\theta t^\alpha)}\right)\\
&=\dfrac{M_2\|z_1-z_2\|_B}{\Gamma(\alpha)}\int_{0}^{t}(t-s)^{\alpha-1}\mathbb{E}_\alpha(\theta s^\alpha)ds+M_3E_\alpha(\theta t^\alpha)\|z_1-z_2\|_B.
\end{align*}
But,
$$
I^\alpha_{0,t}E_\alpha(\theta t^\alpha)=\dfrac{1}{\theta}\left[ E_\alpha(\theta t^\alpha)-1\right]\leq \dfrac{1}{\theta} E_\alpha(\theta t^\alpha),~t\in J.
$$
This gives
$$\|(Tz_1)(t)-(Tz_2)(t)\|\leq\dfrac{M_2\|z_1-z_2\|_B}{\theta}\mathbb{E}_\alpha(\theta t^\alpha)+M_3E_\alpha(\theta t^\alpha)\|z_1-z_2\|_B,\; t\in J.
$$
Therefore,
\begin{align*}
\|Tz_1-Tz_2\|_B&=\sup_{t\in J}\left( \dfrac{\|(Tz_1-Tz_2)(t)\|}{\mathbb{E}_\alpha(\theta t^\alpha)}\right)\\ &\leq\left( \dfrac{M_2}{\theta}+M_3\right) \|z_1-z_2\|_B,\; t\in J.
\end{align*}
Since  $0<M_3<1$, we can choose sufficiently large  $\theta$, so that $ \dfrac{M_2}{\theta}+M_3 <1$. This proves that $T:C_L(J,B_R)\to C_L(J,B_R)$ is contraction and hence Picard operator. By contraction principle there is $z^*$ in $C_L(J,B_R)$ such that $z^*=Tz^*$. This $z^*$ is the unique solution of functional integral equation \eqref{Pe3.2}. Further for any $z\in C_L(J,B_R)$, $\|T^nz-z^*\|_B\to 0$  as $n\to\infty$. Substituting this $z^*$ in \eqref{Pe3.1}, we obtain the unique solution $x^*$ of the nonlinear implicit FDEs \eqref{Pe1.1}--\eqref{Pe1.2}.
\end{proof}
\subsection{Dependency of solution Through Picard Operator Theory: }
To investigate the  data dependency of the solution of the  nonlinear implicit FDEs \eqref{Pe1.1}--\eqref{Pe1.2},  we consider the another nonlinear implicit FDEs of the form
\begin{align}
^c_{0}\mathcal{D}_{t}^\alpha x(t)&= g\left( t,  x(t),\;^c_{0}\mathcal{D}_{t}^\alpha x(t) \right), t\in[0, T],\label{e5.1}  \\
x(0)&=y_0\in X, \label{e5.2}
\end{align}
where $g:J\times  X\times X\to X$ is  any nonlinear function need not equal to $f$.

 \begin{theorem}\label{ThmP3.3}
 Assume that the functions $f$ and $g$ satisfies the hypothesis \normalfont{(A1)}. Let there exists $\eta\in L^1(J,\mathbb{R}_+)\cap C(J,\mathbb{R}_+)$ such that
 $$\|f(t,x,y)-g(t,x,y)\|\leq \eta(t),\; t\in J,  ~x,y \in X.$$
If  the condition \eqref{e3.4} holds and  $\theta>0$ is such that $\dfrac{M_2}{\theta}+M_3<1$, then the  solution  $x^*$ of \eqref{Pe1.1}--\eqref{Pe1.2} and the  solution  $y^*$ of \eqref{e5.1} --\eqref{e5.2} satisfies the inequality 
\begin{align} \label{e311}
 \|x^*-y^*\|_B\leq \|x_0-y_0\|+\dfrac{K_\eta}{\theta\left[ 1-\left( \dfrac{M_2}{\theta}+M_3\right) \right] },
\end{align}
where $\;K_\eta=\max \left\lbrace \eta(t):t\in J\right\rbrace$.
 \end{theorem}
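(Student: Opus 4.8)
The plan is to reduce both initial value problems, via Lemma \ref{lm3.1}, to the functional integral equations satisfied by the Caputo derivatives $z={}^c_0\mathcal{D}_t^\alpha x$ and $w={}^c_0\mathcal{D}_t^\alpha y$, and then to compare the two Picard operators generating these derivatives by means of Lemma \ref{PLm2.1}. For problem \eqref{Pe1.1}--\eqref{Pe1.2} the relevant operator is precisely the map $T$ constructed in Theorem \ref{Thm3.2}, with unique fixed point $z^*\in C_L(J,B_R)$ and $x^*(t)=x_0+\frac{1}{\Gamma(\alpha)}\int_0^t(t-s)^{\alpha-1}z^*(s)\,ds$. For \eqref{e5.1}--\eqref{e5.2} I would introduce the companion operator
\[
(Sw)(t)=g\Big(t,\;y_0+\tfrac{1}{\Gamma(\alpha)}\int_0^t(t-s)^{\alpha-1}w(s)\,ds,\;w(t)\Big),\qquad t\in J.
\]
Since $g$ also satisfies \normalfont{(A1)} and \eqref{e3.4} holds, the three-step self-map argument and the contraction estimate of Theorem \ref{Thm3.2} apply verbatim to $S$, so $S:C_L(J,B_R)\to C_L(J,B_R)$ is a contraction with the \emph{same} constant $c=\frac{M_2}{\theta}+M_3<1$; hence $S$ is a Picard operator with unique fixed point $w^*$, and $y^*(t)=y_0+\frac{1}{\Gamma(\alpha)}\int_0^t(t-s)^{\alpha-1}w^*(s)\,ds$.

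First I would estimate the uniform distance between $T$ and $S$ in the Bielecki metric. For any $z\in C_L(J,B_R)$, inserting and subtracting the term $f\big(t,\,y_0+\frac{1}{\Gamma(\alpha)}\int_0^t(t-s)^{\alpha-1}z(s)\,ds,\,z(t)\big)$ and applying the Lipschitz bound \normalfont{(A1)} for $f$ together with $\|f-g\|\le\eta$ gives the pointwise estimate $\|(Tz)(t)-(Sz)(t)\|\le M_2\|x_0-y_0\|+\eta(t)$. Because $\mathbb{E}_\alpha(\theta t^\alpha)\ge 1$ on $J$, dividing by $\mathbb{E}_\alpha(\theta t^\alpha)$ and taking the supremum yields $d_{\mathcal B}(Tz,Sz)=\|Tz-Sz\|_B\le\gamma$, where $\gamma$ collects the right-hand-side discrepancy $K_\eta$ (and, if $x_0\ne y_0$, the additional $M_2\|x_0-y_0\|$ contribution). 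With $T$ a contraction of constant $c$ satisfying $F_T=\{z^*\}$ and $w^*\in F_S$, Lemma \ref{PLm2.1} then delivers
\[
\|z^*-w^*\|_B\le\frac{\gamma}{1-c}=\frac{\gamma}{1-\big(\frac{M_2}{\theta}+M_3\big)}.
\]

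It then remains to transfer this to the solutions $x^*,y^*$. Subtracting the integral representations gives $x^*(t)-y^*(t)=(x_0-y_0)+\frac{1}{\Gamma(\alpha)}\int_0^t(t-s)^{\alpha-1}\big(z^*(s)-w^*(s)\big)\,ds$. Bounding $\|z^*(s)-w^*(s)\|\le\|z^*-w^*\|_B\,\mathbb{E}_\alpha(\theta s^\alpha)$ and invoking the Mittag--Leffler identity $I^\alpha_{0,t}\mathbb{E}_\alpha(\theta t^\alpha)=\frac1\theta[\mathbb{E}_\alpha(\theta t^\alpha)-1]\le\frac1\theta\mathbb{E}_\alpha(\theta t^\alpha)$ already used in Theorem \ref{Thm3.2}, I obtain $\|x^*(t)-y^*(t)\|\le\|x_0-y_0\|+\frac{1}{\theta}\|z^*-w^*\|_B\,\mathbb{E}_\alpha(\theta t^\alpha)$. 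Dividing once more by $\mathbb{E}_\alpha(\theta t^\alpha)\ge1$ and taking the supremum produces $\|x^*-y^*\|_B\le\|x_0-y_0\|+\frac1\theta\|z^*-w^*\|_B$, into which the bound for $\|z^*-w^*\|_B$ is substituted to reach \eqref{e311}.

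The step I expect to be the main obstacle is the comparison estimate and the bookkeeping of the two independent perturbations. The implicit coupling through the third slot of $f$ is what forces the contraction constant to carry the extra $M_3$, so the factor $1-(\frac{M_2}{\theta}+M_3)$ must be shown to appear uniformly for \emph{both} $T$ and $S$; and one must carefully route the initial-data perturbation $x_0-y_0$ additively through the constant term of the integral representation (yielding the leading $\|x_0-y_0\|$ term, thanks to $\mathbb{E}_\alpha\ge1$) while sending the right-hand-side discrepancy through $z^*-w^*$ and the $\frac1\theta$ gain from the fractional integral, which is what produces the term $K_\eta/\big(\theta[1-(\frac{M_2}{\theta}+M_3)]\big)$. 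Keeping the $f$-versus-$g$ contribution to $\gamma$ separate from the $x_0$-versus-$y_0$ contribution is precisely the delicate point that pins down the stated constant.
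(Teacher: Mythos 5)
Your overall route is exactly the paper's: construct the companion operator $S$ from $g$ and $y_0$, bound the uniform distance between $T$ and $S$, apply Lemma \ref{PLm2.1} to compare the fixed points $z^*$ and $w^*$, and transfer the comparison to $x^*,y^*$ through the integral representation using $I^{\alpha}_{0,t}\mathbb{E}_\alpha(\theta t^\alpha)=\frac{1}{\theta}\left[\mathbb{E}_\alpha(\theta t^\alpha)-1\right]\leq\frac{1}{\theta}\mathbb{E}_\alpha(\theta t^\alpha)$ and $\mathbb{E}_\alpha(\theta t^\alpha)\geq 1$. The transfer step in your last display is carried out correctly and matches the paper.

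The gap is that your argument, as written, does not yield \eqref{e311}. Your pointwise estimate $\|(Tz)(t)-(Sz)(t)\|\leq M_2\|x_0-y_0\|+\eta(t)$ is the correct consequence of (A1) and the hypothesis on $f-g$, since the first slots of $T$ and $S$ receive the \emph{different} inputs $x_0+I^{\alpha}_{0,t}z$ and $y_0+I^{\alpha}_{0,t}z$, so the insert-and-subtract really is needed. But then $\gamma=M_2\|x_0-y_0\|+K_\eta$, Lemma \ref{PLm2.1} gives $\|z^*-w^*\|_B\leq\gamma\big/\left(1-\left(\frac{M_2}{\theta}+M_3\right)\right)$, and your final substitution produces
\[
\|x^*-y^*\|_B\leq\|x_0-y_0\|+\frac{M_2\|x_0-y_0\|+K_\eta}{\theta\left[1-\left(\frac{M_2}{\theta}+M_3\right)\right]},
\]
which carries the extra term $M_2\|x_0-y_0\|\big/\left(\theta\left[1-\left(\frac{M_2}{\theta}+M_3\right)\right]\right)$ and is strictly weaker than \eqref{e311} whenever $x_0\neq y_0$. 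The ``keeping the two contributions separate'' that you flag as the delicate point cannot be done inside this argument: Lemma \ref{PLm2.1} accepts only a single uniform bound $\gamma$ on $d(Tz,Sz)$, so the initial-data discrepancy inevitably enters $\gamma$ and is amplified by the same factor as $K_\eta$. For what it is worth, the paper ``achieves'' \eqref{e311} only by asserting $\|(Tz)(t)-(Sz)(t)\|\leq\eta(t)$ outright, silently ignoring the $x_0$-versus-$y_0$ mismatch in the first arguments; since the hypothesis compares $f$ and $g$ at \emph{equal} arguments, that step is unjustified, and the constant in \eqref{e311} is really only established in the case $x_0=y_0$. Your more careful estimate exposes precisely this flaw in the paper, but as a proof of the theorem with its stated constant it does not close.
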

 \begin{proof} By Lemma ~\ref{lm3.1}, the equivalent integral equation to the nonlinear implicit FDEs \eqref{e5.1} --\eqref{e5.2} is given by 
 \begin{equation}\label{e5.3}
  x(t)=y_0+\dfrac{1}{\Gamma(\alpha)}\int_{0}^{t}(t-s)^{\alpha-1}z(s)ds,\; t\in J,
  \end{equation} 
  where $z$ is the solution of the functional integral equation
  \begin{equation}\label{e5.4}
  z(t)=g\left( t,y_0+\dfrac{1}{\Gamma(\alpha)}\int_{0}^{t}(t-s)^{\alpha-1}z(s)ds, z(t)\right), t\in J.  
  \end{equation}
   Since $g:J\times X\times X\to X$ satisfies the conditions of Theorem \ref{Thm3.2}, proceeding as in the proof of Theorem \ref {Thm3.2}, the condition $\eqref{e3.4}$ allows us to choose $\tilde{L}>0$ and $\tilde{R}>0$ such that the mapping $$S : C_{\tilde{L}}(J,B_{\tilde{R}})\to C_{\tilde{L}}(J,B_{\tilde{R}})$$ defined by
 $$S(z)(t)=g\left( t,y_0+\dfrac{1}{\Gamma(\alpha)}\int_{0}^{t}(t-s)^{\alpha-1}z(s)ds, z(t)\right), ~t\in J   $$
 has unique fixed point $F_S\in C_{\tilde{L}}(J,B_{\tilde{R}})$. 
 Define $L^* =\max\{L,\tilde{L}\}$ . Then one can verify that  $F_T,~F_S\in C_{L^*}(J,B_{R^*}).$ 
 
 Now, for any $z\in C_{L^*}(J,B_{R^*})$ and $t\in J$, we have
 \begin{align*}
& \|(Tz)(t)-(Sz)(t)\|
\\
&=\left\| f\left( t,x_0+\dfrac{1}{\Gamma(\alpha)}\int_{0}^{t}(t-s)^{\alpha-1}z(s)ds, z(t)\right)-g\left( t,y_0+\dfrac{1}{\Gamma(\alpha)}\int_{0}^{t}(t-s)^{\alpha-1}z(s)ds, z(t)\right)\right\| \\
&\leq \eta(t) \leq K_\eta.
 \end{align*}
 Since $\mathbb{E}_\alpha(\theta t^\alpha)\geq 1$ for all $t\in J,$ we have 
  $$\|Tz-Sz\|_B=\sup_{t\in J}\left( \dfrac{\|(Tz-Sz)(t)\|}{\mathbb{E}_\alpha(\theta t^\alpha)}\right)\leq K_\eta .$$
Since the operators $T,~S$ satisfies the conditions of Lemma \ref{PLm2.1}, by applying it, we obtain
 $$ \|F_T-F_S\|_B\leq\dfrac{ K_\eta}{1-\left( \dfrac{M_2}{\theta}+M_3\right)}.$$ 
 Note that  $$x^*(t)=x_0+\dfrac{1}{\Gamma(\alpha)}\int_{0}^{t}(t-s)^{\alpha-1}F_T(s)ds,\; t\in J;$$
 and
 $$y^*(t)=y_0+\dfrac{1}{\Gamma(\alpha)}\int_{0}^{t}(t-s)^{\alpha-1}F_S(s)ds,\; t\in J;$$
 are the unique solutions of nonlinear implicit FDEs  \eqref{Pe1.1}--\eqref{Pe1.2} and \eqref{e5.1} --\eqref{e5.2} respectively. Then for any $t\in J$, we have
 \begin{align*}
 \|x^*(t)-y^*(t)\|&\leq \|x_0-y_0\|+\dfrac{1}{\Gamma(\alpha)}\int_{0}^{t}(t-s)^{\alpha-1}\|F_T(s)-F_S(s)\|ds\\
  &\leq\|x_0-y_0\|+\dfrac{1}{\Gamma(\alpha)}\int_{0}^{t}(t-s)^{\alpha-1}\mathbb{E}_\alpha\left(\theta s^\alpha \right)\sup_{s\in J}\left( \dfrac{\|F_T(s)-F_S(s)\|}{\mathbb{E}_\alpha\left(\theta s^\alpha \right)}\right) ds\\
 &=\|x_0-y_0\|+\dfrac{\|F_T-F_S\|_B}{\Gamma(\alpha)}\int_{0}^{t}(t-s)^{\alpha-1}\mathbb{E}_\alpha\left(\theta s^\alpha \right)ds\\
 &=\|x_0-y_0\|+\|F_T-F_S\|_B\left( \dfrac{\mathbb{E}_\alpha\left(\theta t^\alpha \right)-1}{\theta}\right) \\
 &\leq\|x_0-y_0\|+\dfrac{ \mathbb{E}_\alpha\left(\theta t^\alpha \right)K_\eta}{\theta\left[ 1-\left( \dfrac{M_2}{\theta}+M_3\right)\right] }. 
 \end{align*}  
 This gives, 
 \begin{equation*}
\|x^*-y^*\|_B=\sup_{t\in J}\left( \dfrac{\|(x^*-y^*)(t)\|}{\mathbb{E}_\alpha\left(\theta t^\alpha \right)}\right)\leq \|x_0-y_0\|+\dfrac{ K_\eta}{\theta\left[ 1-\left( \dfrac{M_2}{\theta}+M_3\right)\right] },
 \end{equation*}
 which is the desired inequality \eqref{e311}.
 \end{proof}
\subsection*{Remark:} 
\begin{itemize}
\item[ (1)] Theorem \ref{ThmP3.3} gives the dependence of the solution of the problem \eqref{Pe1.1}-\eqref{Pe1.2} on the initial condition as well as on the functions involved on the right-hand side.
\item[ (2)] If $K_\eta=0$ in \eqref{e311}, that is when $f=g$, Theorem \ref{ThmP3.3} gives the dependency of solution of \eqref{Pe1.1}-\eqref{Pe1.2} on initial condition.
\item[ (3)] If $x_0=y_0$ and $K_\eta\ne0$ in \eqref{e311}, then Theorem \ref{ThmP3.3} gives the dependency of solution of \eqref{Pe1.1}-\eqref{Pe1.2} on functions involved in the right hand side of equation.
\item[(4)] If $x_0= y_0$ and $K_\eta=0$ in \eqref{e311}, Theorem \ref{ThmP3.3} gives the  uniqueness of solution of the problem  \eqref{Pe1.1}-\eqref{Pe1.2}.
\end{itemize}
\subsection{Dependency of solution through Pompeiu--Hausdorff functional} 
 Next, we consider an equation of the form,
 \begin{equation}\label{Pe4.1}
  x_z(t)=z(0)+\dfrac{1}{\Gamma(\alpha)}\int_{0}^{t}(t-s)^{\alpha-1}z(s)ds,\; t\in J;
  \end{equation} 
  where $z$ is a solution of fractional functional equation  
  \begin{equation}\label{Pe4.2}
  z(t)=f\left( t,z(0)+\dfrac{1}{\Gamma(\alpha)}\int_{0}^{t}(t-s)^{\alpha-1}z(s)ds, z(t)\right), t\in J,
  \end{equation} 
where $f$ is as in the problem \eqref{Pe1.1}-\eqref{Pe1.2}.
 From equations \eqref{Pe4.1}--\eqref{Pe4.2} it follows that $x_z(0)=z(0)=f(0,z(0),z(0))$
 \begin{theorem}\label{Thm4.1} Let $f:J\times X\times X\to X$ satisfies the assumptions  \normalfont{(A1)}  and $f(0,x,x)=x,$ for all $x\in X.$ Then the equation  \eqref{Pe4.1} has a solution in $C_L(J,B_R)$ for some $L,R>0$. If $\mathcal{S}\subset C_L(J,B_R)$ is its solution set, then card $\mathcal{S}$= card $B_R$.
 \end{theorem}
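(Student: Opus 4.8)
The plan is to read the functional equation \eqref{Pe4.2} as the fixed-point equation $z=Tz$ for the operator
$(Tz)(t)=f\!\left(t,\,z(0)+\tfrac{1}{\Gamma(\alpha)}\int_{0}^{t}(t-s)^{\alpha-1}z(s)\,ds,\,z(t)\right)$,
and then to invoke the weakly Picard operator characterisation of Lemma \ref{Lm2.3}. First I would fix $L,R>0$ by repeating the constant choices \eqref{e3.5}--\eqref{e3.6} from the proof of Theorem \ref{Thm3.2}, now allowing the initial value to be any $z(0)$ with $\|z(0)\|\le R$. The solutions $x_z$ of \eqref{Pe4.1} are obtained from $z$ by $x_z=z(0)+\tfrac{1}{\Gamma(\alpha)}\int_0^{t}(t-s)^{\alpha-1}z(s)\,ds$ and, conversely, $z={}^{c}_{0}\mathcal{D}_{t}^{\alpha}x_z$; hence $z\mapsto x_z$ is a bijection between $F_T$ and $\mathcal{S}$, and it suffices to show $\operatorname{card}F_T=\operatorname{card}B_R$.

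The key structural observation is that the hypothesis $f(0,x,x)=x$ forces $T$ to preserve initial values: $(Tz)(0)=f(0,z(0),z(0))=z(0)$. Consequently, writing $X_\xi=\{z\in C_L(J,B_R):z(0)=\xi\}$ for $\xi\in B_R$, one gets a partition $C_L(J,B_R)=\bigcup_{\xi\in B_R}X_\xi$ with every $X_\xi\in I(T)$. On a single fibre $X_\xi$ the operator $T$ coincides exactly with the operator of Theorem \ref{Thm3.2} for the fixed initial datum $x_0=\xi$, so the Bielecki-norm contraction estimate established there (Part II) applies verbatim and shows that $T|_{X_\xi}:X_\xi\to X_\xi$ is a contraction, hence a Picard operator, with a unique fixed point $z^*_\xi$ satisfying $z^*_\xi(0)=\xi$.

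With (i) $X_\xi\in I(T)$ and (ii) $T|_{X_\xi}$ Picard for every $\xi$, Lemma \ref{Lm2.3} yields that $T$ is a weakly Picard operator and $F_T=\{z^*_\xi:\xi\in B_R\}$. The assignment $\xi\mapsto z^*_\xi$ is then a bijection of $B_R$ onto $F_T$: it is injective because $z^*_\xi(0)=\xi$ separates the fibres, and surjective because any fixed point lies in the fibre indexed by its own initial value. Therefore $\operatorname{card}\mathcal{S}=\operatorname{card}F_T=\operatorname{card}B_R$, as claimed.

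The hard part will be the uniform self-map property. The radius in \eqref{e3.5} was calibrated to one fixed $x_0$, whereas here $\xi=z(0)$ sweeps all of $B_R$, so the Step-2 estimate becomes $\|(Tz)(t)\|\le M_2\|\xi\|+K+\left(\tfrac{M_2}{\Gamma(\alpha+1)}T^{\alpha}+M_3\right)R$ and must be pushed below $R$ for every $\|\xi\|\le R$ simultaneously. I would resolve this either by enlarging $R$ so that $\tfrac{M_2R+K}{1-\left(\frac{M_2}{\Gamma(\alpha+1)}T^{\alpha}+M_3\right)}\le R$ (which only tightens \eqref{e3.4}), or, to keep \eqref{e3.4} as stated, by a cardinality squeeze that bypasses the global self-map: Theorem \ref{Thm3.2} already furnishes distinct solutions $z^*_\xi\in C_L(J,B_R)$ for all $\xi$ in a smaller ball $B_{R_0}$ with $R_0>0$, giving $\operatorname{card}\mathcal{S}\ge\operatorname{card}B_{R_0}$, while the fibrewise contraction forces at most one solution per initial value, so $z\mapsto z(0)$ injects $\mathcal{S}$ into $B_R$ and $\operatorname{card}\mathcal{S}\le\operatorname{card}B_R$. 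Since every ball in a nontrivial normed space has the same cardinality as the whole space, $\operatorname{card}B_{R_0}=\operatorname{card}B_R$ and the squeeze closes.
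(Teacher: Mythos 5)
Your proposal follows essentially the same route as the paper: partition $C_L(J,B_R)$ into the initial-value fibres $X_\xi=\{z: z(0)=\xi\}$, use the hypothesis $f(0,x,x)=x$ to make each fibre invariant, run the Bielecki-norm contraction of Theorem \ref{Thm3.2} fibrewise, and conclude via Lemma \ref{Lm2.3} that the operator is weakly Picard with fixed-point set in bijection with the set of admissible initial values. You are in fact more careful than the paper on the two points it glosses over --- the uniform self-map property when $z(0)$ ranges over all of $B_R$ (the paper simply says ``proceeding as in the proof of Theorem \ref{Thm3.2}'') and the cardinality claim (which the paper delegates to Theorem 4.1 of the cited Wang--Zhou--Medved paper) --- and your cardinality squeeze resolves both correctly.
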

 \begin{proof}
 Consider the operator 
$T_*:\left( C_L(J,B_R),\|\cdot\|_B\right) \to \left( C_L(J,B_R),\|\cdot\|_B\right)  $
defined by,
\begin{equation}\label{eP3.10}
T_*(z)(t)=f\left( t,z(0)+\dfrac{1}{\Gamma(\alpha)}\int_{0}^{t}(t-s)^{\alpha-1}z(s)ds, z(t)\right),\; t\in J
\end{equation}
Proceeding as in proof of Theorem  \ref{Thm3.2}, there exist constants $L,R>0$  such that $T_*$  is  a continuous operator on $C_L(J,B_R)$. One can verify that, the  operator $T_*$ does not satisfies Lipschitz condition and  hence  $T_*$ is not a  Picard operator.

\noindent Next, We apply Lemma \ref{Lm2.3} to prove  that the operator $T_*$ is a  weakly Picard operator. For each $ \alpha\in B_R$, we define $X_\alpha=\left\lbrace z\in C_L(J,B_R) : z(0)=\alpha\right\rbrace $.
Clearly, $C_L(J,B_R)=\bigcup_{\alpha\in B_R}X_\alpha$. By assumption  for any $z\in X_\alpha,$   we have
$$T_*(z)(0)=f\left( 0,z(0), z(0)\right)=f\left( 0,\alpha,\alpha \right)=\alpha. $$ 
Therefore, for each $\alpha\in B_R$, $X_\alpha$ is invariant under $T_*$. Proceeding as in the proof of Theorem \eqref{Thm3.2}, one can verify that for each $\alpha\in B_R$,  the operator $T_*|_{X_\alpha}:X_\alpha\to X_\alpha$ is a Picard operator. Using   Lemma \ref{Lm2.3},  $T_*$ is   weakly Picard operator. By definition 2.2, for any $x_0\in C_L(J,B_R$ $) $ the seq $T_*^n(x_0)$ converges to the fixed point of $T^*$, which is the solution of fractional functional equation \eqref{Pe4.2}, from which we can obtain  the solution of \eqref{Pe4.1}.  Following the similar steps of Theorem 4.1 of  \cite{JWang7}, one can complete the proof of 
$$ \mbox{card}~\mathcal{S}= \mbox{card}~ B_R.$$  
\end{proof}

In order to discuss the data dependency of solution, we consider the equations \eqref{Pe4.1}--\eqref{Pe4.2}, and the equations
\begin{equation}\label{Pe4.4}
 x_z(t)=z(0)+\dfrac{1}{\Gamma(\alpha)}\int_{0}^{t}(t-s)^{\alpha-1}z(s)ds,\; t\in J,
\end{equation} 
where $z$ is a solution of functional equation  
\begin{equation}\label{Pe4.5}
z(t)=g\left( t,\tilde{z}(0)+\dfrac{1}{\Gamma(\alpha)}\int_{0}^{t}(t-s)^{\alpha-1}z(s)ds, z(t)\right), t\in J,  
\end{equation} 
and  $g\in C(J\times X\times X, X) $ is any other nonlinear function need not equal to $f$. 
 \begin{theorem}\label{Thm4.2}
 Assume that $f$ and $g$ satisfies the hypothesis $\normalfont{(A1)}$ and $f(0,x,x)=x=g(0,x,x)$ for all $x\in X$. Let there exists $K >0$ such that
 \begin{align*}
 \|f(t,x,y)-g(t,x,y)\|\leq K\mathbb{E}_\alpha(\theta t^\alpha),\;  t\in J,\; x,y\in X,
 \end{align*}
for sufficiently large $\theta>0$ satisfying  $\dfrac{M_2}{\theta}+M_3<1$.
If  $\mathcal{S}$ and $\hat{\mathcal{S}}$ are the solution sets of the equation \eqref{Pe4.2} and \eqref{Pe4.4} respectively, then there exists $L^*,~ R^*>0$ such that
 
\begin{align}\label{kk}
H_{\|\cdot\|_B} \left(\mathcal{S},\hat{\mathcal{S}}\right)\leq \dfrac{KT^\alpha}{\Gamma(\alpha+1)\left[ 1-\left( \dfrac{M_2}{\theta}+M_3\right)\right]},
\end{align}
 where $H_{\|\cdot\|_B}$ is the Pompeiu-Hausdorff functional with respect to $\|\cdot\|_B$ on $C_{L^*}(J,B_{R^*})$.
 \end{theorem}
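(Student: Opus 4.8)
The plan is to prove \eqref{kk} by working with the two functional--equation operators, controlling them slice by slice, and then pushing the estimate through the fractional integral to the $x$--solutions. First I would fix a common space: with $L,R$ (resp.\ $\tilde L,\tilde R$) the constants produced by Theorem~\ref{Thm3.2} for $f$ (resp.\ $g$), set $L^{*}=\max\{L,\tilde L\}$, $R^{*}=\max\{R,\tilde R\}$ and define on $C_{L^{*}}(J,B_{R^{*}})$ the operator $T_{*}$ of \eqref{eP3.10} (built from $f$) and its analogue $S_{*}$ (built from $g$). By Theorem~\ref{Thm4.1} both are weakly Picard operators for the partition $C_{L^{*}}(J,B_{R^{*}})=\bigcup_{a\in B_{R^{*}}}X_{a}$, $X_{a}=\{z:z(0)=a\}$; here the hypothesis $f(0,x,x)=x=g(0,x,x)$ guarantees $T_{*}(z)(0)=S_{*}(z)(0)=z(0)$, so each $X_{a}$ is invariant under both operators, and each restriction $T_{*}|_{X_{a}},S_{*}|_{X_{a}}$ is a contraction on the complete metric space $(X_{a},d_{\mathcal B})$ with the common constant $c:=\frac{M_{2}}{\theta}+M_{3}<1$. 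Denote by $z^{*}_{a}$ and $w^{*}_{a}$ their respective unique fixed points, so that $F_{T_{*}}=\{z^{*}_{a}:a\in B_{R^{*}}\}$ and $F_{S_{*}}=\{w^{*}_{a}:a\in B_{R^{*}}\}$.

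Next I would estimate the gap between the operators. For every $z$ the second and third arguments of $f$ and $g$ coincide, so $\|T_{*}(z)(t)-S_{*}(z)(t)\|=\|f(t,\xi,z(t))-g(t,\xi,z(t))\|\le K\,\mathbb{E}_{\alpha}(\theta t^{\alpha})$ with $\xi=z(0)+\frac{1}{\Gamma(\alpha)}\int_{0}^{t}(t-s)^{\alpha-1}z(s)\,ds$; dividing by $\mathbb{E}_{\alpha}(\theta t^{\alpha})\ge 1$ gives $\|T_{*}z-S_{*}z\|_{B}\le K$, which is condition (iii) of Lemma~\ref{PLm2.1} with $\gamma=K$. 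Applying Lemma~\ref{PLm2.1} on each slice $X_{a}$---or, equivalently, reading off $\|z^{*}_{a}-w^{*}_{a}\|_{B}\le c\,\|z^{*}_{a}-w^{*}_{a}\|_{B}+K$ from $z^{*}_{a}=T_{*}z^{*}_{a}$, $w^{*}_{a}=S_{*}w^{*}_{a}$ and the slice contraction---yields the uniform bound $\|z^{*}_{a}-w^{*}_{a}\|_{B}\le K/(1-c)$.

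Finally I would transfer this to the $x$--solutions $x_{z^{*}_{a}}$ and $x_{w^{*}_{a}}$ defined by \eqref{Pe4.1} and \eqref{Pe4.4}. Since $z^{*}_{a}(0)=w^{*}_{a}(0)=a$, the two constant terms cancel and only the fractional integral of $z^{*}_{a}-w^{*}_{a}$ survives; estimating $\mathbb{E}_{\alpha}(\theta s^{\alpha})\le\mathbb{E}_{\alpha}(\theta t^{\alpha})$ under the integral gives
\begin{equation*}
\|x_{z^{*}_{a}}-x_{w^{*}_{a}}\|_{B}\le\frac{T^{\alpha}}{\Gamma(\alpha+1)}\,\|z^{*}_{a}-w^{*}_{a}\|_{B}\le\frac{K\,T^{\alpha}}{\Gamma(\alpha+1)\,(1-c)},
\end{equation*}
uniformly in $a\in B_{R^{*}}$. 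Because $\mathcal S=\{x_{z^{*}_{a}}:a\in B_{R^{*}}\}$ and $\hat{\mathcal S}=\{x_{w^{*}_{a}}:a\in B_{R^{*}}\}$ are parametrized by the same set through matching initial data, both supremum--infimum terms defining $H_{\|\cdot\|_{B}}(\mathcal S,\hat{\mathcal S})$ are dominated by $\sup_{a}\|x_{z^{*}_{a}}-x_{w^{*}_{a}}\|_{B}$, which is exactly \eqref{kk}.

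The genuine obstacle is the matching in this last step. One is tempted to bound $H_{\|\cdot\|_{B}}(F_{T_{*}},F_{S_{*}})\le K/(1-c)$ in one stroke by Lemma~\ref{PLm2.2} (whose hypothesis (i) is precisely the slice contraction established above) and then transport it through the integration map $z\mapsto x_{z}=z(0)+I^{\alpha}_{0,t}z$. This route does \emph{not} reproduce the stated constant: in the Bielecki norm that map is only Lipschitz with constant $1+\frac{T^{\alpha}}{\Gamma(\alpha+1)}$, since an unmatched pair $z,w$ contributes a spurious $\|z(0)-w(0)\|$ term. Organizing the Pompeiu--Hausdorff comparison so that it respects the initial datum $a$---which is exactly what the slice decomposition supplies---is what annihilates that term and isolates the clean factor $\frac{T^{\alpha}}{\Gamma(\alpha+1)}$. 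The only remaining discretionary choice is to bound $\mathbb{E}_{\alpha}(\theta s^{\alpha})\le\mathbb{E}_{\alpha}(\theta t^{\alpha})$ under the integral rather than use $I^{\alpha}_{0,t}\mathbb{E}_{\alpha}(\theta t^{\alpha})=\theta^{-1}[\mathbb{E}_{\alpha}(\theta t^{\alpha})-1]$, the former being what reproduces the constant displayed in \eqref{kk}.
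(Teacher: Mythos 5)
Your proposal is correct, and it takes a genuinely different---and in fact tighter---route than the paper. The paper stays global: it verifies the graphic-contraction condition $\|T_*^2z-T_*z\|_B\le c\,\|T_*z-z\|_B$ with $c=\frac{M_2}{\theta}+M_3$ (and the same for $S_*$), checks $\|T_*z-S_*z\|_B\le K$, and applies Lemma \ref{PLm2.2} on all of $C_{L^*}(J,B_{R^*})$ to obtain $H_{\|\cdot\|_B}(F_{T_*},F_{S_*})\le K/(1-c)$; it then transports this bound through the fractional integral by writing $x_z^*(t)=z(0)+\frac{1}{\Gamma(\alpha)}\int_0^t(t-s)^{\alpha-1}F_{T_*}(s)\,ds$ and $y_z^*(t)=z(0)+\frac{1}{\Gamma(\alpha)}\int_0^t(t-s)^{\alpha-1}F_{S_*}(s)\,ds$, i.e.\ treating the fixed-point \emph{sets} as single functions sharing the initial value $z(0)$, and replacing $\|F_{T_*}(s)-F_{S_*}(s)\|$ by $H_{\|\cdot\|_B}(F_{T_*},F_{S_*})$ under the integral. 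That transfer step silently presupposes exactly what you identify as the genuine obstacle: that fixed points of $T_*$ and $S_*$ can be paired so as to share initial data; without such matching an extra $\|u(0)-v(0)\|$ term survives, and the method only yields the weaker constant $\bigl(1+\frac{T^\alpha}{\Gamma(\alpha+1)}\bigr)\frac{K}{1-c}$. Your slice decomposition $X_a=\left\lbrace z : z(0)=a\right\rbrace$---invariant under both operators because $f(0,x,x)=x=g(0,x,x)$, and on which both are honest contractions with constant $c$---combined with Lemma \ref{PLm2.1} applied slice by slice, produces matched fixed points $z_a^*,\,w_a^*$ with $z_a^*(0)=w_a^*(0)=a$ and $\|z_a^*-w_a^*\|_B\le K/(1-c)$ uniformly in $a\in B_{R^*}$; the constant terms then genuinely cancel, the factor $\frac{T^\alpha}{\Gamma(\alpha+1)}$ comes out cleanly, and the supremum over $a$ dominates both sup--inf terms of $H_{\|\cdot\|_B}(\mathcal{S},\hat{\mathcal{S}})$. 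In short: the paper's route is shorter and uses the advertised Pompeiu--Hausdorff lemma (Lemma \ref{PLm2.2}) directly, but its final transport is heuristic; your route trades Lemma \ref{PLm2.2} for slice-wise use of Lemma \ref{PLm2.1}, obtains the stronger pointwise-matched estimate, and thereby supplies precisely the justification that the paper's last step is missing.
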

 \begin{proof}
Since $g:J\times X\times X\to X$ satisfies the conditions of Theorem \ref{Thm4.1}, proceeding as in the proof of Theorem \ref {Thm4.1}, we can choose $\tilde{L}>0$ and $\tilde{R}>0$ such that the mapping $$S_*:\left( C_{\tilde{L}}(J,B_{\tilde{R}}),\|\cdot\|_B\right) \to \left( C_{\tilde{L}}(J,B_{\tilde{R}}),\|\cdot\|_B\right)  $$
defined by
\begin{equation}\label{eP3.11}
S_*(z)(t)=g\left( t,z(0)+\dfrac{1}{\Gamma(\alpha)}\int_{0}^{t}(t-s)^{\alpha-1}z(s)ds, z(t)\right)
\end{equation}
has a fixed point $F_{S_*}\in C_{\tilde{L}}(J,B_{\tilde{R}}).$
Define $L^* =\max\{L,\tilde{L}\}$ and $R^*=\max\{R,\tilde{R}\}$, then one can verify that  $F_{T_*},F_{S_*}\in C_{L^*}(J,B_{R^*}).$  Now, for any $z \in  C_{L^*}(J,B_{R^*}) $ and  $t\in J$, we have 
 \begin{align*}
&\|T^2_*(z)(t) -T_*(z)(t)\|=\|T_*(T_*(z)(t)) -T_*z(t)\|\\
&=\left\|f\left( t,\, T_*(z)(0)+\dfrac{1}{\Gamma(\alpha)}\int_{0}^{t}(t-s)^{\alpha-1}T_*(z(s))ds,\,  T_*(z(t))\right) \right.\\
& \left. \qquad-f\left( t,z(0)+\dfrac{1}{\Gamma(\alpha)}\int_{0}^{t}(t-s)^{\alpha-1}z(s)ds, z(t)\right)\right\| \\
&\leq M_2\left\| \left( z(0)+\dfrac{1}{\Gamma(\alpha)}\int_{0}^{t}(t-s)^{\alpha-1}T_*(z(s))ds\right)- \left( z(0)+\dfrac{1}{\Gamma(\alpha)}\int_{0}^{t}(t-s)^{\alpha-1}z(s)ds\right)\right\| \\
&\qquad + M_3\|T_*(z)(t)-z(t)\|\\
&\leq \dfrac{M_2}{\Gamma(\alpha)}\int_{0}^{t}(t-s)^{\alpha-1}\|T_*(z(s))-z(s)\|ds+M_3\|T_*(z)(t)-z(t)\|\\
&\leq \dfrac{M_2}{\Gamma(\alpha)}\int_{0}^{t}(t-s)^{\alpha-1}\mathbb{E}_\alpha(\theta s^\alpha)\dfrac{\|T_*(z(s))-z(s)\|}{\mathbb{E}_\alpha(\theta s^\alpha)}ds+M_3\mathbb{E}_\alpha(\theta t^\alpha)\dfrac{\|T_*(z(t))-z(t)\|}{\mathbb{E}_\alpha(\theta t^\alpha)}\\
&\leq \dfrac{M_2}{\Gamma(\alpha)}\|T_*(z)-z\|_B\int_{0}^{t}(t-s)^{\alpha-1}\mathbb{E}_\alpha(\theta s^\alpha)ds+M_3\mathbb{E}_\alpha(\theta t^\alpha)\|T(z)-z\|_B\\
&\leq {M_2}\|T_*(z)-z\|_B\dfrac{\mathbb{E}_\alpha(\theta t^\alpha)}{\theta}+M_3\mathbb{E}_\alpha(\theta t^\alpha)\|T_*(z)-z\|_B.
\end{align*}
This gives
$$\dfrac{\|T^2_*z(t) -T_*z(t)\| }{\mathbb{E}_\alpha(\theta t^\alpha)} \leq \dfrac{M_2}{\theta}\|T_*z-z\|_B+M_3\|T_*z-z\|_B$$ 
Therefore
$$
\|T^2_*z -T_*z\|_B  \leq \dfrac{M_2}{\theta}\|T_*z-z\|_B+M_3\|T_*z-z\|_B=\left( \dfrac{M_2}{\theta}+M_3\right) \|T_*z-z\|_B,~ z\in C_{L^*}(J,B_{R^*}).
$$ 
On the similar line, we have
$$\|S^2_*z -S_*z\|_B  \leq \left( \dfrac{M_2}{\theta}+M_3\right) \|S_*z-z\|_B,~ z\in C_{L^*}(J,B_{R^*})$$ 
Further, for any $z \in  C_{L^*}(J,B_{R^*}) $ and  $t\in J$, we have 
\begin{align*}
&\|T_*(z)(t) -S_*(z)(t)\|\\
&=\left\|f\left( t,z(0)+\dfrac{1}{\Gamma(\alpha)}\int_{0}^{t}(t-s)^{\alpha-1}z(s)ds, z(t)\right)\right.\\
& \left. \qquad- g\left( t,z(0)+\dfrac{1}{\Gamma(\alpha)}\int_{0}^{t}(t-s)^{\alpha-1}z(s)ds, z(t)\right)\right\| \\
&\leq 
K \mathbb{E}_\alpha(\theta t^\alpha)
\end{align*}
This gives,
$$ \|T_*z -S_*z\|_B=\sup_{t\in J} \dfrac{\|T_*z(t) -S_*z(t)\|}{\mathbb{E}_\alpha(\theta t^\alpha)}\leq K,~ z\in C_{L^*}(J,B_{R^*}).$$ 
Since, the operators $T_*,\;S_*$ fulfills the requirements of Lemma \eqref{PLm2.2}, by an application of it, we obtain
$$H_{\|\cdot\|_B} (F_{T_*},F_{S_*})\leq \dfrac{K}{1-\left( \dfrac{M_2}{\theta}+M_3\right) }.$$
Note that  $$x_z^*(t)=z(0)+\dfrac{1}{\Gamma(\alpha)}\int_{0}^{t}(t-s)^{\alpha-1}F_{T_*}(s)ds,\; t\in J;$$
 and
 $$y_z^*(t)=z(0)+\dfrac{1}{\Gamma(\alpha)}\int_{0}^{t}(t-s)^{\alpha-1}F_{S_*}(s)ds,\; t\in J;$$
 are the  solutions of  \eqref{Pe4.1} and \eqref{Pe4.4}  respectively. Therefore, for any $t\in J$, we have
 \begin{align*}
 \|x_z^*(t)-y_z^*(t)\|&\leq \dfrac{1}{\Gamma(\alpha)}\int_{0}^{t}(t-s)^{\alpha-1}\|F_{T_*}(s)-F_{S_*}(s)\|ds \end{align*}  
 This gives, 
 \begin{align*}
  H_{\|\cdot\|_B}(x_z^*,y_z^*)&\leq \dfrac{1}{\Gamma(\alpha)}\int_{0}^{t}(t-s)^{\alpha-1}H_{\|\cdot\|_B} (F_{T_*},F_{S_*})ds\\
  &\leq  \dfrac{K}{\Gamma(\alpha)\left[ 1-\left( \dfrac{M_2}{\theta}+M_3\right)\right]  }\int_{0}^{t}(t-s)^{\alpha-1}ds\\
  &=\dfrac{Kt^\alpha}{\Gamma(\alpha+1)\left[ 1-\left( \dfrac{M_2}{\theta}+M_3\right)\right]}\\
  &\leq \dfrac{KT^\alpha}{\Gamma(\alpha+1)\left[ 1-\left( \dfrac{M_2}{\theta}+M_3\right)\right]}
  \end{align*} 
Therefore, 
\begin{align*}
H_{\|\cdot\|_B} \left(\mathcal{S},\hat{\mathcal{S}}\right)\leq \dfrac{KT^\alpha}{\Gamma(\alpha+1)\left[ 1-\left( \dfrac{M_2}{\theta}+M_3\right)\right]},
\end{align*}
which is the desired inequality \eqref{kk}
 \end{proof}
\begin{rem}Theorem \ref{Thm4.2}, gives the dependency of solution on  functions involved in the right hand side of the problem \eqref{Pe4.1}-\eqref{Pe4.2}. In particular, if $K=0$ then $f=g$, and we obatin uniquness of the solution. 
\end{rem}
 \section{Example}
Consider the nonlinear implicit FDEs of the form,
\begin{align}
^c_{0}\mathcal{D}_{t}^{\frac{1}{2}} x(t)&= \dfrac{\sqrt{\pi}}{4}-\dfrac{1}{2}t^{\frac{1}{2}} +\dfrac{1}{2}\left(x(t)+|^c_{0}\mathcal{D}_{t}^{\frac{1}{2}} x(t)|\right) , t\in J:=[0,0.5], \label{Pe5.1} \\
x(0)&=1\in X\label{Pe5.2}
\end{align}
in  the Banach space $X=(\mathbb{R},|\cdot|)$. Define $f:J\times X\times X\to X$ by $$f(t,x,y)=\dfrac{\sqrt{\pi}}{4}-\dfrac{1}{2}t^{\frac{1}{2}}+\dfrac{1}{2}\left(x+|y|\right)$$ 
For any $t_1,t_2\in J$ and $x_i,y_i\in X,\;i=1,2$, we have 
\begin{align*}
|f(t_1,x_1,y_1)-f(t_2,x_2,y_2)|&=\left|\left( \dfrac{\sqrt{\pi}}{4}-\dfrac{1}{2}t_1^{\frac{1}{2}} + \dfrac{1}{2}\left( x_1+|y_1|\right)\right) -\left( \dfrac{\sqrt{\pi}}{4}-\dfrac{1}{2}t_2^{\frac{1}{2}}+\dfrac{1}{2}\left(  x_2+|y_2|\right)\right) \right| \\
&\leq \dfrac{1}{2}|t_1^{\frac{1}{2}}-t_2^{\frac{1}{2}}|+\dfrac{1}{2}\left( |x_1-x_2|+||y_1|-|y_2||\right)\\
&\leq\dfrac{1}{2} |t_1-t_2|+\dfrac{1}{2} \left( |x_1-x_2|+|y_1-y_2|\right) 
\end{align*}
Therefore, the function $f$ satisfies hypothesis (A1) with constants  $M_1=M_2=M_3=\dfrac{1}{2}.$ 
Note that, 
$$
\dfrac{M_2 T^\alpha}{\Gamma(\alpha+1)}+M_3=\dfrac{\frac{1}{2}(0.5)^{\frac{1}{2}}}{\Gamma(\frac{3}{2})}+M_3=\sqrt{\dfrac{0.5}{\pi}}+\dfrac{1}{2}=0.8989<1.$$
Since the function $f$ satisfies all conditions of Theorem \ref{Thm3.2}, the problem \eqref{Pe5.1}--\eqref{Pe5.2} has unique solution. By direct calculation one can verify that the solution of the  problem \eqref{Pe5.1}--\eqref{Pe5.2} is  
$$
x^*(t)=t^{\frac{1}{2}}+\mathbb{E}_{\frac{1}{2}}(t^\frac{1}{2}), t\in J.
$$
Next, we consider another  implicit FDEs
\begin{align}
^c_{0}\mathcal{D}_{t}^{\frac{1}{2}} x(t)&= \dfrac{t^\frac{1}{2}}{2}+ +\dfrac{1}{2}\left(x(t)+|^c_{0}\mathcal{D}_{t}^{\frac{1}{2}} x(t)|\right) , t\in[0,0.5], \label{Pe5.3} \\
x(0)&=1-\dfrac{\sqrt{\pi}}{2}\in X\label{Pe5.4}
\end{align}
Define $g:J\times X\times X\to X$ by $$g(t,x,y)=\dfrac{t^\frac{1}{2}}{2}+ +\dfrac{1}{2}\left(x(t)+|^c_{0}\mathcal{D}_{t}^{\frac{1}{2}} x(t)|\right) , ~t\in J.$$ 
One can verify that the function $g:J\times X\times X\to X$ defined by $$
g(t,x,y)=\dfrac{t^\frac{1}{2}}{2}+ +\dfrac{1}{2}\left(x+|y|\right),
$$ 
satisfies all conditions of Theorem \ref{Thm3.2} with constants  $M_1=M_2=M_3=\dfrac{1}{2}.$  One can verify that 
$$
y^*(t)= t^{\frac{1}{2}}+\mathbb{E}_{\frac{1}{2}}(t^\frac{1}{2})-\dfrac{\sqrt{\pi}}{2} ,\; t\in J.
$$
is a exact solution of \eqref{Pe5.3}-\eqref{Pe5.4}. Note that for any $t\in J$ and $x,y\in X$
\begin{align*}
\left| f(t,x,y)-g(t,x,y)\right|&= \left| \left( \dfrac{\sqrt{\pi}}{4}-\dfrac{1}{2}t^{\frac{1}{2}}+\dfrac{1}{2}\left(x+|y|\right)\right) -\left( \dfrac{t^\frac{1}{2}}{2}+ +\dfrac{1}{2}\left(x+|y|\right)\right) \right| \\
&=\left| \dfrac{\sqrt{\pi}}{4}-t^\frac{1}{2}\right|\leq  \dfrac{\sqrt{\pi}}{4}+t^\frac{1}{2}:=\eta(t).
\end{align*} 
Since $f,g$ satisfies all the conditions of Theorem \ref{ThmP3.3}, by applying it, we obtain 
\begin{equation}\label{e5.5}
\|x^*-y^*\|_B\leq \|x_0-y_0\|+\dfrac{ K_\eta}{\theta\left[ 1-\left( \dfrac{M_2}{\theta}+M_3\right)\right] },
\end{equation} 
where $K_\eta=\max _{t\in J}\eta(t)=\max _{t\in J}\left( \dfrac{\sqrt{\pi}}{4}+t^\frac{1}{2}\right)=\dfrac{\sqrt{\pi}}{4}+(0.5)^\frac{1}{2} =1.1502.$ 
Take $\theta=2$, we have $\dfrac{M_2}{\theta}+M_3<1$. With this choice of  $\theta$, from \eqref{e5.5}, we have
 \begin{equation}\label{e56}
 \|x^*-y^*\|_B\leq |1-(1-\dfrac{\sqrt{\pi}}{2})|+\dfrac{ 1.1502}{2\left[ 1-0.75\right] }=\dfrac{\sqrt{\pi}}{2}+2.3004
 \end{equation}
Using the exact solution and by actual calculation, we have 
\begin{equation}\label{e57}
\|x^*-y^*\|_B=\sup_{t\in J}\dfrac{|x^*-y^*|}{\mathbb{E}_\frac{1}{2}(\theta t^{\frac{1}{2}})}\leq|x^*-y^*|=\dfrac{\sqrt{\pi}}{2}<\dfrac{\sqrt{\pi}}{2}+2.3004 
\end{equation}
\begin{rem}Form the inequalities \eqref{e56} and \eqref{e57}, it is observed that the difference of solution calculated  by applying  Theorem \ref{ThmP3.3} is similar to the difference of solution that calculated actually.
\end{rem} 
\section{Concluding remarks}
Observing the applicability of Caputo--Fabrizio (CF) fractional derivative operator \cite{CF1}-\cite{CF4}
and Atangana--Baleanu--Caputo (ABC) fractional derivative operator \cite{ABC1,ABC2,ABC3} that possesses a non-singular kernel, one can analyze and extend results of the present papers for the implicit FDEs with CF and ABC
fractional derivative operators.

\end{document}